\newtheorem{thm}{Theorem}[section]
\newtheorem{lem}[thm]{Lemma}
\newtheorem{prop}[thm]{Proposition}
\theoremstyle{definition}
\newtheorem{defn}[thm]{Definition}
\newtheorem{example}[thm]{Example}
\theoremstyle{remark}
\numberwithin{equation}{section}
\newcommand{\DD}{\mathcal D}
\begin{document}
\title[Stationary dense operators]{Stationary dense operators in sequentially complete locally convex spaces}

\author{Daniel Velinov}
\address{Department for Mathematics, Faculty of Civil Engineering, Ss. Cyril and Methodius University, Skopje,
Partizanski Odredi
24, P.O. box 560, 1000 Skopje}
\email{velinovd@gf.ukim.edu.mk}

\author{Marko Kosti\' c}
\address{Faculty of Technical Sciences,
University of Novi Sad,
Trg D. Obradovi\' ca 6, 21125 Novi Sad, Serbia}
\email{marco.s@verat.net}

\author{Stevan Pilipovi\' c}
\address{Department for Mathematics and Informatics,
University of Novi Sad,
Trg D. Obradovi\' ca 4, 21000 Novi Sad, Serbia}
\email{pilipovic@dmi.uns.ac.rs}

{\renewcommand{\thefootnote}{} \footnote{2010 {\it Mathematics
Subject Classification.} 35R11, 45D05, 45N05, 47D99.
\\ \text{  }  \ \    {\it Key words and phrases.} stationary dense operators, distribution semigroups, ultradistribution semigroups, well-posedness, locally convex spaces.}}

{\renewcommand{\thefootnote}{} \footnote{ $\,^*$ This research is partially supported by grant 174024 of Ministry
of Science and Technological Development, Republic of Serbia.}}

\begin{abstract}
The purpose of this paper is to investigate the stationary dense operators and their connection to distribution semigroups and abstract Cauchy problem in sequentially complete spaces.
\end{abstract}
\maketitle

\section{Introduction and Preliminaries}
In \cite{li121} and \cite{ku112} are given definitions of a distribution semigroups with densely and non densely defined generators.  Stationary dense operators in a Banach space $E$ and their connections to densely defined distribution semigroups are introduced and analyzed in \cite{ku101}. Our main aim in this paper is to consider such operators when $E$ is sequentially complete locally convex space.\\
\indent
When we are dealing with a semigroup in a locally convex space, in order to have the existence of the resolvent of an infinitesimal generator $A$,  we suppose that the semigroup is equicontinuous.
Moreover for $A$, we are led to study the behavior of $A_{\infty}$ in $D_{\infty}(A)$. If $A$ is stationary dense, the information lost in passing from $A$ in $E$ to $A_{\infty}$ in $D_{\infty}(A)$, can be retrieved.
Let us emphasize that the interest for the stationarity of $A$ comes out from the solvability of the Cauchy problem $u'=Au$, $u(0)=u_0$, for every $u\in D(A^n)$ for some $n$. This is noted in Proposition \ref{pc-kunst-isto}. In Theorem \ref{thmmain} we have a partial answer to the converse question: Whether the solvability for every $x\in D(A^n)$ implies $n$-stationarity. Also, we extend the results of \cite{fujiwara}, \cite{ku101}, \cite{ush} and give new examples of stationary dense operators on sequentially complete locally convex spaces. \\
\indent Here we denote with $E$ a sequentially complete locally convex space with system of seminorms $\circledast$. For a closed linear operator $A$ we define $D_{\infty}(A)=\bigcap_{n=1}^{\infty}D(A^nx)$, for $x\in E$. Then the space $D_{\infty}(A)$ is Fr\'echet space as a projective limit of Fr\'echet spaces with seminorms $p_n(x)=\sup\limits_{p\in\circledast}\sum\limits_{k=0}^{n}p(A^kx)$, for  for all $q\in\circledast$ and $x\in D_{\infty}(A)\subseteq E$, $n\in{\mathbb N}$.\\
The distribution semigroups are defined by (\cite{li121}):\\
A distribution $G$ is a distribution semigroup (shortly $DSG$) if following conditions are satisfied:
\begin{itemize}
\item[(i)] $G\in\DD'_+(L(E;E)),\quad G=0, \quad \mbox{for}\quad t<0;$
\item[(ii)] $G(\varphi\ast\psi)=G(\varphi)G(\psi)$ for all $\varphi,\psi\in\DD_0$, where $\DD_0$ is the space of all $\varphi\in\DD$ such that $\varphi(t)=0$ for $t<0$;
\item[(iii)] Let $\varphi\in\DD_0$, $x\in E$ and $y=G(\varphi)x$. The distribution $Gy$ is equal to a continuous function $u$ on $[0,+\infty)$ and $u(t)=0$ for $t<0$ with range in $E$ and $u(0)=y$;
\item[(iv)] The range of the all elements $G(\varphi)x$, where $\varphi\in\DD_0$, $x\in E$ is dense in $E$;
\item[(v)] If for all $\varphi\in\DD_0$, $G(\varphi)x=0$, $x\in E$ then $x=0$. \end{itemize}
In further consideration will be used the last definition of distribution semigroups, although often are considered semigroups which are not densely defined (cf. \cite{ku112}).
In order to give the definition of exponential distribution semigroups we give the definition of the spaces through tempered distributions.
Let $a\geq 0.$ Then
$$
{\mathcal{SE}}_{a}({\mathbb R}):=\{\phi \in C^\infty({\mathbb
R}) :  e^{a t}\phi\in{\mathcal S}(\mathbb R)\}.
$$
Define the convergence in this space by
$$\phi_n\rightarrow 0 \mbox{ in } {\mathcal{SE}}_{a}({\mathbb R}) \mbox{ iff }
e^{a \cdot}\phi_n\rightarrow 0 \mbox{ in } {\mathcal
S}({\mathbb R}).$$
Denote by $\mathcal{SE}'_{a}(\mathbb{R},E)$ the space
$L(\mathcal{SE}_{a}(\mathbb{R}),E)$ which is formed from all
continuous linear mappings from $\mathcal{SE}_{a}(\mathbb{R})$
into $E$ equipped with the strong topology.

It holds,
$$
 F\in {\mathcal{SE}}'_{a}(\mathbb{R},E) \mbox{
if and only if}\quad e^{-a\cdot}F\in {\mathcal{S}}'(\mathbb{R},E).
$$
In sequel will be given the structure of the space ${\mathcal{SE}}'_{a}({\mathbb{R}},E)$ used in definition of exponential distribution semigroups.
\begin{prop}\label{novo}
Let $T \in \mathcal{SE}'_{a}(\mathbb{R},E).$ Then there exists
an polynomial $P$ and a function $g\in
C({\mathbb R},E)$ with the property that there exist $k>0$ and
$C>0$, such that
$$
e^{-ax}||g(x)||\leq C |x|^k, \;x\in \mathbb{R}\; \mbox{ and }\;
T=P(d/dt)g.
$$
\end{prop}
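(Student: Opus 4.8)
The plan is to transport $T$ to an ordinary $E$-valued tempered distribution, apply the classical structure theorem there, and then carry the resulting representation back through the exponential weight. The first step is to make explicit the isomorphism hidden in the definition of the convergence on $\mathcal{SE}_{a}(\RR)$: by the very definition of its topology, the multiplication map $\phi\mapsto e^{a\cdot}\phi$ is a topological isomorphism of $\mathcal{SE}_{a}(\RR)$ onto $\SSS(\RR)$. Transposing it turns $T$ into a functional $S\in\SSS'(\RR,E)$ given by $S(\psi):=T(e^{-a\cdot}\psi)$ for $\psi\in\SSS(\RR)$; this is precisely the distribution denoted $e^{-a\cdot}T$ in the equivalence stated just before the proposition, and $S$ is continuous because $\psi\mapsto e^{-a\cdot}\psi$ is continuous from $\SSS(\RR)$ into $\mathcal{SE}_{a}(\RR)$.

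Next I would invoke the structure theorem for $E$-valued tempered distributions: since $E$ is sequentially complete, every $S\in\SSS'(\RR,E)$ can be written as $S=Q(d/dt)h$ for some polynomial $Q$ and some continuous function $h\colon\RR\to E$ of polynomial growth, say $\norm{h(x)}\le C(1+\abs{x})^{k}$. Sequential completeness enters exactly here: $h$ is produced as an iterated (Riemann) antiderivative of the continuous $E$-valued data read off from $S$, and those vector-valued integrals converge in $E$ only because Cauchy nets of Riemann sums converge.

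Finally I would transfer the representation back. For $\phi\in\mathcal{SE}_{a}(\RR)$ one has $T(\phi)=S(e^{a\cdot}\phi)=\langle h,\,Q(-d/dt)(e^{a\cdot}\phi)\rangle$. Using the conjugation identity $(d/dt)^{j}(e^{a\cdot}\phi)=e^{a\cdot}(d/dt+a)^{j}\phi$ together with the Leibniz rule, one gets $Q(-d/dt)(e^{a\cdot}\phi)=e^{a\cdot}R(d/dt)\phi$, where $R(s)=Q(-s-a)$. Hence $T(\phi)=\langle e^{a\cdot}h,\,R(d/dt)\phi\rangle$, and setting $g:=e^{a\cdot}h$ and $P(s):=Q(s-a)$ yields $T=P(d/dt)g$, while $e^{-a x}\norm{g(x)}=\norm{h(x)}\le C(1+\abs{x})^{k}$, which is the asserted growth estimate.

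I expect the genuine obstacle to be the structure theorem for $\SSS'(\RR,E)$ in the merely sequentially complete setting, not the formal weight manipulation. One must verify that the standard antiderivative/representation argument survives without completeness beyond the sequential one—so that all the vector-valued integrals defining $g$ exist—and that the continuity estimate for $S$ (which fixes the order of $Q$ and the exponent $k$) is uniform over the system $\circledast$. A secondary, cosmetic point is reconciling the bound $(1+\abs{x})^{k}$ obtained above with the form $\abs{x}^{k}$ in the statement; this discrepancy is immaterial for large $\abs{x}$ and is absorbed by enlarging $k$ and $C$ under the usual conventions on polynomial growth.
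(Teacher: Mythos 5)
Your proposal is correct and is essentially the paper's own (omitted) route: the equivalence $F\in\mathcal{SE}'_{a}(\mathbb{R},E)\Leftrightarrow e^{-a\cdot}F\in\mathcal{S}'(\mathbb{R},E)$ recorded immediately before the proposition is exactly your conjugation by the exponential weight, after which the paper, like you, treats the structure theorem for $E$-valued tempered distributions as a black box, citing its Banach-space ultradistribution/hyperfunction analogues \cite{tica}, \cite{hypkpv} rather than proving it.

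The obstacle you single out is real and in fact unremovable in the stated generality: for $E=\mathbb{C}^{\mathbb{N}}$ (Fr\'echet, hence sequentially complete) the map $\phi\mapsto(\phi^{(n)}(0))_{n\in\mathbb{N}}$ belongs to $\mathcal{S}'(\mathbb{R},E)$ but cannot equal $Q(d/dt)h$ for any single polynomial $Q$ and continuous polynomially bounded $h$, since its $n$-th coordinate $\delta^{(n)}$ has order exactly $n$ while $Q(d/dt)h$ has order at most $\deg Q$; so the $\Vert\cdot\Vert$ in the proposition must be read as tacitly assuming $E$ normed (as in the cited references), and this---like the cosmetic mismatch between $C|x|^{k}$ and $C(1+|x|)^{k}$ that you note---is a defect of the paper's statement rather than a gap in your argument.
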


The proof of this proposition is similar like in ultradistribution case, even more simple (see \cite{tica}, \cite{hypkpv} for ultradistribution and hyperfunction case, when $E$ is a Banach space), hence will be omitted.\\
A distribution semigroup $G$ is an exponential distribution semigroup,
($E$-distribution semigroup, $(EDSG)$ shortly), if in addition to
$(i)-(v),$ $G$ fulfills:
$$
(vi)\;\;(\exists a\geq 0)  G\in {\mathcal
SE}'_a({\mathbb R},L(E))).\;
$$
The generator $A$ of a distribution semigroup $G$ is defined as
$$A=\{(x,y)\in E^2\, :\, \forall\varphi\in\DD, G({-\varphi}')x=G(\varphi)y\}.$$
Also we want to refer to \cite{li121} and \cite{ku112} for structural theorems for distribution semigroups and exponential semigroups (for ultradistribution case see ~\cite[Theorem 2.7]{tica}).
An $C_0$-semigroup $(T(t))_{t\geq0}$ in sequentially complete locally convex space $E$ is an equicontinuous semigroup if the set $\{T(t)\,:\, t\geq0\}$ is an equicontinuous set in $E$. Moreover, we say that an $C_0$-semigroup is quasi-equicontinuous if there is an $a\geq0$ such that $\{e^{-at}T(t)\, :\, t\geq0\}$ is an equicontinuous set.\\
\indent Let $D$ and $E$ are sequentially complete locally convex spaces and let $P\in\DD^{'}_0(L(D,E))$. Then $G\in{\mathcal D}^{'}_0(L(E,D))$ is said to be a distribution fundamental solution for $P={\delta}'\otimes I-\delta\otimes A$ when $P\ast G=\delta\otimes I_E$ and $G\ast P=\delta\otimes I_D$.
Let $E$ be a sequentially complete locally convex space. In \cite{sch166}, is given structural theorem for $E$-valued distributions and there is an  analogue in the case of $E$-valued tempered distributions which can be proved similar like in ultradistribution case. Next theorem is a structural theorem for distribution semigroups. Actually, a version of this  theorem is given in \cite{tica} for ultradistribution semigroups when $E$ is a Banach space. The proof is similar.
\begin{thm}\label{strucdis}
\begin{itemize}
\item[(a)] $A$ generates a (DSG) $G$.
\item[(a)'] $A$ generates a  (EDSG) $G$.
\item[(b)] $A$ generates a  (DSG) $G$ such that for
every $a>0,$ $G$ is of the form $G =P^{a}_L(-id/dt)S^{a}_K$ on
${\mathcal D}((-\infty, a))$ where $S^{a}_K:(-\infty,a)\rightarrow
L(E,[D(A)])$ is continuous, $S^{a}_K(t)=0,\; t\leq 0.$
\item[(b)'] $A$ generates a (EDSG)  $G$  so that
$G$ is of the form\\ $G=P_L(-id/dt)S_K$ on
${\mathcal{SE}}_{a}({\mathbb R})$, where $S_K: {\mathbb
R}\rightarrow L(E,[D(A)])$ is continuous, $S_K(t)=0,\; t\leq 0$ and
$e^{-at}||S_k(t)||\leq A|t|^k,$ for some $k>0$ and $A>0$.
\item[(c)] For every $a>0$, $A$ is the generator  of a local
non-degenerate $K_{a}$-convoluted semigroup
$(S^{a}_{K_{a}}(t))_{t\in [0,a)},$ where $K_{a}={\mathcal
L}^{-1}(\frac{1}{P^{a}_L(-i\lambda)})$ and $P^{a}_{L}$ is a
differential operator  such that for $0<a<b$ the
restriction of $P^{b}_{L}S^{b}_{K},$
on ${\mathcal D}((-\infty,a))$ is equal to $P^{a}_{L}S^{a}_{K}.$
\item[(c)']  $A$ is the generator of a global, exponentially bounded
non-degenerate $K$-convoluted semigroup $(S_{K}(t))_{t\geq 0},$
where $K={\mathcal L}^{-1}(\frac{1}{P_L(-i\lambda)})$.
\item[(d)] There exists an distribution fundamental solution  for
$A$, denoted by $G,$ with the property ${\mathcal N}(G)=\{0\}.$
\item[(d)'] There exists an exponential distribution fundamental solution
$G$ for $A$, with the property ${\mathcal N}(G)=\{0\}.$
\item[(e)] $\rho(A)\supset \Omega_{\alpha,\beta}=\{\xi+i\eta:\, \alpha\log(1+|\eta|)+\beta\}$ and
$$
||R(\lambda : A)||\leq C(1+|\lambda|)^k, \quad \lambda\in \Omega_{\alpha,\beta},
$$
for some $k>0$ and $ C>0$.
\item[(e)']   $\rho(A)\supset \{\lambda \in \mathbb{C} : Re \lambda>a\}$ and
$$
||R(\lambda : A)||\leq C(1+|\lambda|)^k ,\mbox{   } Re \lambda>a,
$$
for some $a, k>0$ and $ C>0$.
\end{itemize}
Then,
(a) $\Leftrightarrow$ (d); (a)' $\Leftrightarrow$ (d)'; (c)
$\Rightarrow$ (d); (c)' $\Rightarrow$ (d)'; (d) $\Leftrightarrow$ (e);
(d)' $\Leftrightarrow$ (e)'; $(a)'$ $\Rightarrow$ (c)'.\end{thm}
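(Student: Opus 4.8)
The plan is to establish the stated web of implications by routing everything through the Laplace transform together with the structural theorems for vector-valued (tempered) distributions, exactly as in the ultradistribution template of \cite{tica}, but carried out in the sequentially complete locally convex setting. The algebraic core is the observation that a (DSG) and a distribution fundamental solution with trivial kernel are two descriptions of the same object, and that the Laplace transform of that object is the resolvent $R(\lambda:A)$; the growth region of the transform then dictates which of the analytic conditions (e) or (e)$'$ holds.

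First, for (a) $\Leftrightarrow$ (d): if $A$ generates a (DSG) $G$, I would show that $G$ is itself a distribution fundamental solution for $P=\delta'\otimes I-\delta\otimes A$. The semigroup law (ii) together with the definition of the generator yields the convolution identities $P\ast G=\delta\otimes I_E$ and $G\ast P=\delta\otimes I_D$, while axiom (v) is precisely $\mathcal N(G)=\{0\}$. Conversely, from a fundamental solution with trivial kernel I would recover the (DSG) axioms: (ii) from associativity of convolution and the fact that $G$ commutes with $P$, the density (iv) from non-degeneracy, and the continuity and initial-value statement (iii) from the structural representation of $G$ as a derivative of a continuous $E$-valued function. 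The exponential version (a)$'$ $\Leftrightarrow$ (d)$'$ is identical once one notes that membership of $G$ in $\mathcal{SE}'_a(\mathbb R,L(E))$ is preserved under this correspondence.

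The central step is (d) $\Leftrightarrow$ (e), with its exponential counterpart (d)$'$ $\Leftrightarrow$ (e)$'$. Here I would apply the structural theorem for $E$-valued (tempered) distributions, the analogue of Proposition \ref{novo}, to write $G=P(d/dt)g$ with $g$ continuous and of polynomial growth. Taking the Laplace transform turns the convolution equation $(\lambda-A)\widehat G(\lambda)=I$ into $\widehat G(\lambda)=R(\lambda:A)$, so that the polynomial growth of $g$ translates directly into $||R(\lambda:A)||\leq C(1+|\lambda|)^k$, while the region of convergence of the transform becomes the logarithmic region $\Omega_{\alpha,\beta}$ in (e) (respectively the half-plane $\RE\lambda>a$ in (e)$'$). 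Conversely, from the resolvent bounds one reconstructs $G$ by an inverse Laplace transform along the boundary of $\Omega_{\alpha,\beta}$; the polynomial bound guarantees convergence in the space of $E$-valued tempered distributions and the analyticity of $R(\lambda:A)$ forces $\mathcal N(G)=\{0\}$.

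For the remaining arrows I would argue as follows. The implications (c) $\Rightarrow$ (d) and (c)$'$ $\Rightarrow$ (d)$'$ are direct constructions: given the $K_a$-convoluted semigroup $S^a_{K_a}$ (resp.\ the global $S_K$) with kernel $K=\mathcal L^{-1}(1/P_L(-i\lambda))$, the distribution $G=P_L(-id/dt)S_K$ satisfies the fundamental-solution identities, since applying $P_L$ undoes the convolution by $K$, and non-degeneracy of the convoluted semigroup gives $\mathcal N(G)=\{0\}$. Finally, (a)$'$ $\Rightarrow$ (c)$'$ follows by invoking the representation (b)$'$: an (EDSG) has a fundamental solution $G\in\mathcal{SE}'_a(\mathbb R,L(E))$ which, by the structure theorem, is of the form $P_L(-id/dt)S_K$ with $S_K:\mathbb R\to L(E,[D(A)])$ continuous and exponentially bounded, $e^{-at}||S_K(t)||\leq A|t|^k$; this $S_K$ is exactly the required global, exponentially bounded, non-degenerate $K$-convoluted semigroup. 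The main obstacle I anticipate is the rigorous justification of the Laplace-transform correspondence in the sequentially complete locally convex setting: one must ensure that the vector-valued contour integrals defining $G$ converge using sequential completeness rather than completeness, that the structural theorem for $E$-valued tempered distributions applies uniformly across the seminorm system $\circledast$, and that the passage between $\Omega_{\alpha,\beta}$ and the distributional growth of $G$ is quantitatively correct. Once these analytic points are secured, the algebraic identities and equivalences follow as in the Banach-space ultradistribution case.
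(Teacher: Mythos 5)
Your overall strategy---identifying a (DSG) with a distribution fundamental solution having trivial kernel, and passing between (d)/(d)$'$ and the resolvent conditions (e)/(e)$'$ via Laplace transform and structure theorems---is exactly the route the paper intends: the paper gives no argument at all for this theorem, but declares the proof ``similar (even simpler)'' to the ultradistribution case of \cite[Theorem 2.7]{tica}, which is precisely the template you describe. Your descriptions of (a) $\Leftrightarrow$ (d), of (c) $\Rightarrow$ (d), (c)$'$ $\Rightarrow$ (d)$'$, of (a)$'$ $\Rightarrow$ (c)$'$ via the representation (b)$'$, and of the exponential chain (d)$'$ $\Leftrightarrow$ (e)$'$ are consistent with that template.

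There is, however, a genuine gap in your treatment of the central unprimed equivalence (d) $\Leftrightarrow$ (e). You propose to write $G=P(d/dt)g$ with $g$ continuous of polynomial growth (the analogue of Proposition \ref{novo}) and to read off both the bound and the region from ``the region of convergence of the transform''. This works only in the exponential case: a distribution fundamental solution in $\mathcal D'_+$ which is not exponential need not be tempered at all (its growth in $t$ can exceed every $e^{at}$), so it admits neither a global Laplace transform nor a global representation with polynomially bounded density; moreover, Laplace transforms, where they do exist, converge on half-planes, never on logarithmic regions. The region $\Omega_{\alpha,\beta}$ in (e) does not arise as a domain of convergence; it arises from localization. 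One must use the local structure of $G$ on $\mathcal D((-\infty,a+\eps))$ as in (b), replace $G$ by $\varphi G$ for a cutoff $\varphi$ equal to $1$ near $(-\infty,a]$ and supported in $(-\infty,a+\eps)$, and transform this compactly supported piece: the convolution identity then reads $(\lambda-A)\widehat{\varphi G}(\lambda)=I+\widehat{R}(\lambda)$, where the error $R$ is supported in $[a,a+\eps]$ and satisfies $\|\widehat{R}(\lambda)\|\leq C(1+|\lambda|)^{k}e^{-a\RE\lambda}$. This error can be absorbed by a Neumann series (here sequential completeness and equicontinuity of the relevant sets are needed) precisely when $\RE\lambda\geq\alpha\log(1+|\lambda|)+\beta$ for suitable $\alpha,\beta$, and that is the only source of the logarithmic region and of the polynomial resolvent bound in (e). Your converse direction (e) $\Rightarrow$ (d), via a contour integral along $\partial\Omega_{\alpha,\beta}$, is correct; but without the localization argument the direct implication (d) $\Rightarrow$ (e), as you describe it, fails for every non-exponential (DSG).
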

Note that the case of exponential distribution semigroup case is given parallel and it is denoted with $'$.
The proof of this theorem is similar (even simpler) with ~\cite[Theorem 2.7]{tica}, so we omit it.

\section{Stationary dense operators in locally convex spaces}
Following P. C. Kunstmann \cite{ku101}, we introduce the notion of a stationary dense operator in a sequentially complete locally convex space as follows.

\begin{defn}\label{pc-kunst}
A closed linear operator $A$ is said to be stationary dense iff
$$
n(A):=\inf\bigl\{k\in\mathbb{N}_0:D(A^m)\subseteq\overline{D(A^{m+1})}\text{ for all }m\geq k\bigr\}<\infty.
$$
\end{defn}

\begin{example}
Let $E=L^p({\mathbb R})$, $1\leq \leq\infty$. We consider the multiplication operator with maximal domain in $E$:
$$Af(x)=(x+ix^2)f(x),$$ $x\in{\mathbb R}$, $f\in E$.
It is clear that $A$ is dense and stationary dense if $1\leq p<\infty$. If $p=\infty$, then $A$ is not stationary dense. Indeed, the function $g(x)=\frac{1}{x^{2n}+1}$ belongs to $D(A^n)\backslash \overline{D(^{n+1})}$, for $n\in{\mathbb N}$.
\end{example}
The abstract Cauchy problem
\[(ACP_1):\left\{
\begin{array}{l}
u\in C([0,\tau):[D(A)])\cap C^1([0,\tau):E),\\
u'(t)=Au(t),\;t\in[0,\tau),\\
u(0)=x,
\end{array}
\right.
\]
where $A$ is a closed linear operator on $E$ and $0<\tau \leq \infty,$
has been analyzed in a great number of research papers and monographs (see e.g. \cite{d81},
\cite{komatsulocally}-\cite{knjigaho}, \cite{ko98},
\cite{ptica}-\cite{tica}, \cite{ku101}-\cite{li121},
\cite{me152} and \cite{ush}).
By a mild solution of problem $(ACP_1)$ we mean any continuous function $t\mapsto u(t;x),$ $t\in [0,\tau)$ such that $A\int^{t}_{0}u(s;x)\, ds=u(t;x)-x,$ $t\in [0,\tau).$

\begin{prop}\label{pc-kunst-isto}
Let $0<\tau \leq \infty $ and $n\in {\mathbb N}_{0}.$
\begin{itemize}
\item[(i)]
Suppose that the abstract Cauchy problem $(ACP_1)$ has a unique mild solution $u(t;x)$ for all $x\in D(A^{n}).$
Then $A$ is stationary dense and $n(A)\leq n.$
\item[(ii)] Suppose that $(S_{n}(t))_{t\in [0,\tau)}$ is a locally equicontinuous
$n$-times integrated semigroup generated by $A.$ Then $A$ is stationary dense and $n(A)\leq n.$
\end{itemize}
\end{prop}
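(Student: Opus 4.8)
The plan is to treat both parts by a common device: for $m\ge n$ and $x\in D(A^{m})$ I will manufacture, for each small $t>0$, an element lying in $D(A^{m+1})$ that converges to $x$ as $t\to 0^{+}$; this at once yields $D(A^{m})\subseteq\overline{D(A^{m+1})}$ for every $m\ge n$, i.e. $n(A)\le n$. In each part the candidate is an (iterated) integral of the available solution, suitably rescaled, and the argument splits into a \emph{regularity} step (the integral lands in $D(A^{m+1})$) and a \emph{consistency} step (the rescaling tends to $x$).

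For part (i), fix $m\ge n$ and $x\in D(A^{m})\subseteq D(A^{n})$, and let $u(\cdot;x)$ be the mild solution. For $k\in\NN$ set $v_{k}(t)=\int_{0}^{t}\frac{(t-s)^{k-1}}{(k-1)!}\,u(s;x)\,ds$. I would prove by induction on $k$ that, as soon as $x\in D(A^{k-1})$, one has $v_{k}(t)\in D(A^{k})$ together with
$$A^{k}v_{k}(t)=u(t;x)-\sum_{i=0}^{k-1}\frac{t^{i}}{i!}A^{i}x .$$
The base case $k=1$ is exactly the defining identity $A\int_{0}^{t}u(s;x)\,ds=u(t;x)-x$ of a mild solution, and the inductive step integrates the previous identity once in $s$ and pulls $A^{k-1}$ through the integral, which is legitimate because $A$ is closed and $s\mapsto A^{k-1}v_{k-1}(s)$ is continuous. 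Taking $k=m+1$ gives $v_{m+1}(t)\in D(A^{m+1})$ for $x\in D(A^{m})$. Since $\frac{m+1}{t^{m+1}}\int_{0}^{t}(t-s)^{m}\,ds=1$, the rescaled element $x_{t}:=\frac{(m+1)!}{t^{m+1}}v_{m+1}(t)$ satisfies $x_{t}-x=\frac{m+1}{t^{m+1}}\int_{0}^{t}(t-s)^{m}\bigl(u(s;x)-x\bigr)\,ds$, whence $q(x_{t}-x)\le\sup_{0\le s\le t}q(u(s;x)-x)\to 0$ for every $q\in\circledast$, using continuity of $u$ and $u(0;x)=x$ (the latter follows from the integral identity and closedness of $A$). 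As $x_{t}\in D(A^{m+1})$, this proves $x\in\overline{D(A^{m+1})}$.

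For part (ii) I would run the same scheme with the integrated semigroup in place of the solution. For $x\in D(A^{m})$, the defining relation $A\int_{0}^{t}S_{n}(s)x\,ds=S_{n}(t)x-\frac{t^{n}}{n!}x$, the mapping property $S_{n}(t)D(A^{m})\subseteq D(A^{m})$ (itself read off from that relation and the closedness of $A$) and interchange of $A^{m}$ with the integral give $A^{m}\int_{0}^{t}S_{n}(s)x\,ds=\int_{0}^{t}S_{n}(s)A^{m}x\,ds\in D(A)$, so that $\int_{0}^{t}S_{n}(s)x\,ds\in D(A^{m+1})$. Here the natural rescaling is $x_{t}:=\frac{(n+1)!}{t^{n+1}}\int_{0}^{t}S_{n}(s)x\,ds$, and consistency follows by expanding, for $x\in D(A^{n})$, $S_{n}(s)x=\sum_{j=0}^{n-1}\frac{s^{n+j}}{(n+j)!}A^{j}x+Q(s)$, where the remainder obeys $q(Q(s))=o(s^{n})$ because $S_{n}(\cdot)A^{n}x$ is strongly continuous with $S_{n}(0)=0$; integrating, the $j=0$ term reproduces $x$ after rescaling while all remaining contributions tend to $0$. (Alternatively, one checks that $(S_{n}(t))$ furnishes a unique mild solution $u(t;x)=\frac{d^{n}}{dt^{n}}S_{n}(t)x$ on $D(A^{n})$ and simply invokes part (i).)

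The routine points are the interchanges of $A^{k}$ with the integrals, justified by sequential completeness of $E$ (existence of the $E$-valued integrals), local equicontinuity (uniform seminorm bounds on $[0,t]$) and closedness of $A$. The only genuinely delicate point is the consistency step in part (ii): one must extract the precise decay $q(Q(s))=o(s^{n})$ of the Taylor remainder, for which $A^{n}x\in E$ together with the strong continuity $S_{n}(0)=0$ are exactly what is needed. In part (i) the analogous step is immediate from continuity of the mild solution, so there the inductive regularity identity for $v_{k}$ is the one thing to get right.
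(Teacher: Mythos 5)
Your argument is correct, but it takes a more self-contained route than the paper, whose proof of this proposition is essentially a citation: part (i) is delegated wholesale to the arguments of \cite[Lemma 1.7, Remark 1.2(i)]{ku101}, and part (ii) is reduced to part (i) by quoting that a locally equicontinuous $n$-times integrated semigroup yields the unique mild solution $u(t;x)=S_{n}(t)A^{n}x+\sum_{j=0}^{n-1}\frac{t^{j}}{j!}A^{j}x$ for $x\in D(A^{n})$. Your treatment of (i) --- the iterated integrals $v_{k}$, the inductive identity $A^{k}v_{k}(t)=u(t;x)-\sum_{i=0}^{k-1}\frac{t^{i}}{i!}A^{i}x$, and the rescaling $x_{t}\to x$ --- is in effect a reconstruction of the Kunstmann argument the paper outsources, so there you are supplying details rather than diverging. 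Where you genuinely differ is (ii): your primary route (Taylor expansion of $S_{n}(s)x$ with remainder $o(s^{n})$, then rescaling $\frac{(n+1)!}{t^{n+1}}\int_{0}^{t}S_{n}(s)x\,ds$) works directly from the defining identity of $S_{n}$ and never needs uniqueness of mild solutions, whereas the paper's reduction to (i) is shorter but must invoke uniqueness of the solution produced by the integrated semigroup (a fact that itself requires the non-degeneracy/local equicontinuity of $(S_{n}(t))$); your parenthetical alternative for (ii) is exactly the paper's proof. One small blemish: your expansion of $S_{n}(s)x$ and the appeal to $S_{n}(0)=0$ degenerate when $n=0$ (a $0$-times integrated semigroup has $S_{0}(0)=I$), but that case is handled trivially by $\frac{1}{t}\int_{0}^{t}S_{0}(s)x\,ds\to x$.

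There is one technical step you should repair. In the inductive step of (i) you pull $A^{k-1}$ through $\int_{0}^{t}v_{k-1}(s)\,ds$ in a single stroke, ``because $A$ is closed and $s\mapsto A^{k-1}v_{k-1}(s)$ is continuous.'' That justification literally requires the operator $A^{k-1}$ to be closed, which for a closed operator $A$ with possibly empty resolvent set need not hold. The standard repair is to pull one factor of $A$ through at a time: first establish the recursion $Av_{k}(t)=v_{k-1}(t)-\frac{t^{k-1}}{(k-1)!}x$ (valid for every $x\in D(A^{n})$, by closedness of $A$ and continuity of the integrand, with $v_{0}:=u(\cdot;x)$), and then apply $A$ algebraically $k-1$ more times to this identity, which is permitted as soon as $x\in D(A^{k-1})$; this yields your formula for $A^{k}v_{k}(t)$ using closedness of $A$ alone. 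The same remark applies to the interchange $A^{m}\int_{0}^{t}S_{n}(s)x\,ds=\int_{0}^{t}S_{n}(s)A^{m}x\,ds$ in part (ii): iterate single factors of $A$, using $AS_{n}(s)y=S_{n}(s)Ay$ for $y\in D(A)$ and the continuity of $s\mapsto S_{n}(s)A^{j}x$.
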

\begin{proof}
One has to use the arguments given in that of \cite[Lemma 1.7]{ku101} (cf. also \cite[Remark 1.2(i)]{ku101}) and the fact that for any locally equicontinuous
$n$-times integrated semigroup $(S_{n}(t))_{t\in [0,\tau)}$ generated by $A$ the abstract Cauchy problem
$(ACP_1)$ has a unique mild solution for all $x\in D(A^{n}),$ given by
$
u(t;x)=S_{n}(t)A^{n}x+\sum^{n-1}_{j=0}\frac{t^{j}}{j!}A^{j}x,$ $t\in [0,\tau)$.
\end{proof}
\begin{lem}\label{lema0} Let $A$ be a closed operator in sequentially complete locally convex space and let $(\lambda_n)\in\rho(A)$ be a sequence such that $\lim_{n\rightarrow\infty}|\lambda_n|=\infty$ and there exist $C>0$ and $k\geq-1$ such that for every $p\in\circledast$, there exists $q\in\circledast$, such that $p(R(\lambda_n:A)x)\leq C|\lambda_n|^kq(x)$ for all $x\in E$ and $n\in\mathbb N$. Then $A$ is stationary dense with $n(A)\leq k+2$.\end{lem}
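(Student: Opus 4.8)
The plan is to approximate each $x\in D(A^{m})$, for $m\ge k+2$, by the elements $x_{n}:=\lambda_{n}R(\lambda_{n}:A)x$ and to show that $x_{n}\to x$ in $E$, so that $x\in\overline{D(A^{m+1})}$. Since this will hold for every $x\in D(A^{m})$ and every $m\ge k+2$, it yields $D(A^{m})\subseteq\overline{D(A^{m+1})}$ for all such $m$, hence $n(A)\le k+2$.

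First I would record the two structural facts about the resolvent that drive the argument. Since $R(\lambda_{n}:A)$ commutes with $A$ on $D(A)$ and $AR(\lambda_{n}:A)=\lambda_{n}R(\lambda_{n}:A)-I$, an easy induction shows that $R(\lambda_{n}:A)$ maps $D(A^{j})$ into $D(A^{j+1})$; in particular $x_{n}\in D(A^{m+1})$, so the approximants already lie in the right space. Next I would prove, again by induction on $r$, the finite ``Taylor expansion''
\begin{equation*}
R(\lambda:A)w=\sum_{j=0}^{r-1}\frac{A^{j}w}{\lambda^{j+1}}+\frac{1}{\lambda^{r}}R(\lambda:A)A^{r}w,\qquad w\in D(A^{r}),
\end{equation*}
whose base case is the identity $R(\lambda:A)w=\lambda^{-1}\bigl(w+R(\lambda:A)Aw\bigr)$.

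With these in hand, the main computation is short. Using $x\in D(A)$ one has $x_{n}-x=\lambda_{n}R(\lambda_{n}:A)x-x=R(\lambda_{n}:A)Ax$, and since $Ax\in D(A^{m-1})$ the expansion with $w=Ax$ and $r=m-1$ gives
\begin{equation*}
x_{n}-x=\sum_{j=1}^{m-1}\frac{A^{j}x}{\lambda_{n}^{j}}+\frac{1}{\lambda_{n}^{m-1}}R(\lambda_{n}:A)A^{m}x.
\end{equation*}
Fix $p\in\circledast$ and let $q\in\circledast$ be the seminorm furnished by the hypothesis. Applying $p$, the finitely many terms in the sum are bounded by $p(A^{j}x)/|\lambda_{n}|^{j}$ and tend to $0$ because $|\lambda_{n}|\to\infty$, while the remainder is controlled by the resolvent estimate as $p\bigl(\lambda_{n}^{-(m-1)}R(\lambda_{n}:A)A^{m}x\bigr)\le C|\lambda_{n}|^{\,k-m+1}q(A^{m}x)$.

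The crux is precisely the exponent bookkeeping in this last term: it decays exactly when $k-m+1<0$, that is when $m\ge k+2$, which is where the hypothesis on $m$ is used and below which the method breaks down (one application of the resolvent bound alone only settles the case $k<0$, so the gain must come from the extra negative powers of $\lambda_{n}$, available only when $x$ is regular enough). Hence $p(x_{n}-x)\to0$ for every $p\in\circledast$, so $x_{n}\to x$ and $x\in\overline{D(A^{m+1})}$, whence $n(A)\le k+2$. I expect the only genuine subtlety to be the locally convex bookkeeping, namely ensuring that a single $q$, depending on $p$ but not on $n$, controls the remainder uniformly in $n$, together with the sharpness of the threshold $m\ge k+2$ dictated by the resolvent growth rate $k$.
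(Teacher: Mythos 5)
Your proof is correct, and it rests on the same core idea as the paper's: approximate $x$ by $\lambda_n R(\lambda_n:A)x$, note that the approximants gain one order of domain regularity, and show convergence using the resolvent growth bound. The difference is that you actually supply the step the paper glosses over. The paper's proof takes $x\in D(A^{k+2})$ and immediately asserts
$$p\bigl(\lambda_n R(\lambda_n:A)x-x\bigr)=p\bigl(R(\lambda_n:A)Ax\bigr)\leq \frac{C'}{|\lambda_n|}\,q(Ax),$$
but this last inequality does not follow from the hypothesis $p(R(\lambda_n:A)y)\leq C|\lambda_n|^{k}q(y)$ unless $k=-1$; for $k\geq 0$ one application of the resolvent bound gives a quantity that may grow. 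Your finite expansion
$$R(\lambda:A)w=\sum_{j=0}^{r-1}\frac{A^{j}w}{\lambda^{j+1}}+\frac{1}{\lambda^{r}}R(\lambda:A)A^{r}w,\qquad w\in D(A^{r}),$$
applied to $w=Ax$ is exactly the missing iteration: it trades the growth $|\lambda_n|^{k}$ of the resolvent against the extra negative powers of $\lambda_n$ available when $x\in D(A^{m})$, and your exponent count $k-m+1<0$ shows precisely why the threshold is $m\geq k+2$. You also correctly run the argument for every $m\geq k+2$ (not only $m=k+2$), which is what the definition of $n(A)$ requires, since $D(A^{m})\subseteq\overline{D(A^{m+1})}$ for the single value $m=k+2$ does not by itself bound $n(A)$. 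So your proposal is not merely a match for the paper's proof; it is a repaired and complete version of it.
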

\begin{proof}
Let $x\in D(A^{k+1})$. Then for every $p\in\circledast$, there exists $q\in\circledast$, such that $p(\lambda_n R(\lambda_n:A)x)\leq \|R(\lambda_n:A)\| q(Ax)$. Now
for $x\in D(A^{k+2})$, it follows $\lambda_n R(\lambda_n:A)x\in D(A^{k+3})$ for all $\mathbb N$. Furthermore $$p(\lambda_n R(\lambda_n:A)x-x)=p(R(\lambda_n:A)Ax)\leq \frac{C'}{|\lambda_n|}q(Ax).$$ Hence, $$x=\lim\limits_{n\longrightarrow\infty}\lambda_n R(\lambda_n:A)x$$ and $x$ belongs to the closure of $D(A^{k+3})$, which means that $A$ is stationary dense and $n(A)\leq k+2$.
\end{proof}
\begin{example} The basic idea for this example is from \cite{ku101}. Let $E$ be a sequentially complete locally convex space and $A$ is a closed linear operator in $E$ which is not stationary dense  and there exist $C,k>0$ (for all $k>0$ there exist $C_k>0$) such that $$p(R(\lambda:A)x)\leq C e^{M(k|\lambda|)}p(x),\Re\lambda\geq0.$$
We construct inductively an increasing sequence $(M_n)_{n\in{\mathbb N_0}}$, $M_0=1$, and satisfying $(M.1)$, $(M.3)'$ and for all $\lambda\geq0$ $$\sum\limits_{p=0}^{\infty} \frac{\lambda^p}{M_p}\leq C_k e^{M(k|\lambda|)}.$$
The space $E_{M_n}$  is defined as $$E_{M_n}=\{f\in{\mathcal C}^{\infty}([0,1],E) : \|f\|_{M_n,h}=\sup\limits_{n}\frac{p(f^{(n)})}{M_n h^n}<\infty, \forall n\in\circledast\}.$$ Let $A=-\frac{d}{ds}$, with domain $D(A)=\{f\in E_{M_n} : f'\in E_{M_n},\, \, f(0)=0\}$. Let $g(t)=R(\lambda:A)f(t)=e^{-\lambda t}\int\limits_0^t e^{\lambda s}f(s)\, ds=\int\limits_0^t e^{-\lambda(t-s)}f(s)\, ds$. Then $g(0)=0$, $g$ is smooth function and $\lambda g+g'=f$. By $$g^{(n)}=f^{(n-1)}+\sum\limits_{k=1}^{n-1}(-\lambda)^k f^{(n-1-k)}+(-\lambda)^ng$$ and $p(R(\lambda:A)f)\leq p(f)$ we have
$$\frac{p(g^{(n)})}{M_n h^n}\leq\frac{p(f^{(n-1)})}{M_{n-1}h^{n-1}}+\sum\limits_{k=1}^{n-1}\frac{|\lambda|^k}{M_kM_{n-1-k}h^{n-1-k}} p(f^{(n-1-k)})+ \frac{|\lambda|^n}{M_nh^n}p(f).$$
Hence,
$$\|g\|_{M_n,h}\leq\sum\limits_{k=0}^{\infty}\frac{|\lambda|^k}{M_k}\|f\|_{M_{n},h}\leq Ce^{M(k|\lambda|)}\|f\|_{M_n,h},$$ which means that $g\in E_{M_n}$. Moreover, $g\in D(A)$ and $\|R(\lambda:A)\|\leq C e^{M(k|\lambda|)}$ and $A$ is a generator of an ultradistribution semigroup.\\
The domain of the operator $A^k$ is $D(A^k)=\{f\in E_{M_n}: \forall j\in\{0,1,2,...,k-1\}, f^{(j-1)}\in E_{M_n}, f^{(j)}(0)=0\}$ and $\overline{D(A^k)}\subset\{f\in E_{M_n}. \forall j\in\{0,1,2,...,k-1\}, f^{(j)}(0)=0\}$, $k\in{\mathbf N_0}$. We consider the function $f(t)=\frac{t^k}{k!}$. Then $f\in D(A^k)$, but not in $\overline{D(A^{k+1})}$, since $f^{(k)}(0)=1$. Hence $A$ is not stationary dense operator. \end{example}

We say that the operator $A$ satisfy the condition $(EQ)$ if:\\
$A_{\infty}$ is a generator of an equicontinuous semigroup $T_{\infty}(t)$ in $D_{\infty}(A)$, i.e. for every $p\in\circledast$ there exists $q\in\circledast$ and $C$ such that
$$p(T_{\infty}(t)x)\leq C q(x),$$ for every $x\in D_{\infty}(A)$.\\

Using the results given in \cite{ku101}, ~\cite[Theorem 4.1]{ush} we can state similar results in our setting (E is sequentially complete locally convex space). Assume that $A$ is stationary dense, satisfies $(EQ)$, $n=n(A)$ and $F$ is the closure of $D(A^n)$ in $E$.
\begin{lem}\label{lema1234}
\begin{itemize}
\item[a)]$A_F$ is densely defined in $F$, where $A_F$ means the restriction of the operator $A$ on $F$;
\item[b)] \label{lema2} $\rho(A;L(E))=\rho(A_F;L(F))$ for all $\lambda\in\rho(A)$. Additionally for all $x\in E$ and $p\in\circledast$, there exist $n\in{\mathbb N}$, $C>0$ such that
$$p(R(\lambda:A)x)\leq C(1+|\lambda|)^n(p(R(\lambda:A_F)x)+1);$$
\item[c)]\label{lema3} The Fr\'echet spaces $D_{\infty}(A)$ and $D_{\infty}(A_F)$ coincide topologically and $A_{\infty}=(A_F)_{\infty}$;
\item[d)]$n(A)=\inf\{k\in{\mathbf N_0}\, :\, \overline{D(A^k)}\subset D_{\infty}(A)\}.$\end{itemize}
\end{lem}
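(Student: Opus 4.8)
The plan is to build everything on the defining property of $n=n(A)$, namely that $D(A^m)\subseteq\overline{D(A^{m+1})}$ for every $m\geq n$. Since the reverse inclusion $\overline{D(A^{m+1})}\subseteq\overline{D(A^m)}$ is automatic, this is equivalent to the stabilization of closures $\overline{D(A^n)}=\overline{D(A^{n+1})}=\cdots=F$, while the minimality of $n(A)$ forces $\overline{D(A^{n-1})}\supsetneq F$. For part (a) I would first observe $D(A^{n+1})\subseteq D(A^n)\subseteq F$ and that $A$ carries $D(A^{n+1})$ into $D(A^n)\subseteq F$; hence $D(A^{n+1})\subseteq D(A_F)\subseteq F$, and taking closures gives $F=\overline{D(A^{n+1})}\subseteq\overline{D(A_F)}\subseteq F$, so $A_F$ is densely defined in $F$.

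For part (b), the inclusion $\rho(A)\subseteq\rho(A_F)$ is the routine half. Given $\lambda\in\rho(A)$, the operator $R(\lambda:A)$ maps $D(A^n)$ into $D(A^{n+1})\subseteq F$ and commutes with $A$; since $D(A^n)$ is dense in $F$ and $R(\lambda:A)\in L(E)$ is continuous, $F$ is invariant under $R(\lambda:A)$. A short computation then shows that the restriction is a two-sided inverse of $\lambda-A_F$ on $F$, so $\lambda\in\rho(A_F)$ and $R(\lambda:A_F)=R(\lambda:A)|_F$. The reverse inclusion $\rho(A_F)\subseteq\rho(A)$ together with the transfer estimate $p(R(\lambda:A)x)\leq C(1+|\lambda|)^n(p(R(\lambda:A_F)x)+1)$ is the technical heart, and I expect it to be the main obstacle. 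Here one must reconstruct $R(\lambda:A)$ on all of $E$ from $R(\lambda:A_F)$; the natural mechanism is to use a fixed $\mu\in\rho(A)$ so that $R(\mu:A)^n$ maps $E$ continuously into $D(A^n)\subseteq F$, and then to iterate the resolvent identity to descend the tower $E\supseteq\overline{D(A)}\supseteq\cdots\supseteq\overline{D(A^n)}=F$, each of the $n$ steps costing one factor of $(1+|\lambda|)$ and thereby producing the polynomial weight. This is precisely where the structure developed in \cite{ku101} and \cite[Theorem 4.1]{ush} must be imported.

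For part (c) I would argue first at the level of sets: if $x\in D_\infty(A)$ then $A^jx\in D(A^n)\subseteq F$ for every $j$, so $x\in D(A_F^k)$ for all $k$, giving $D_\infty(A)\subseteq D_\infty(A_F)$, while the reverse inclusion is immediate from $A_F\subseteq A$. Since $A^k$ and $A_F^k$ agree on this common domain and the defining seminorms $p_n(x)=\sup_{p\in\circledast}\sum_{k=0}^{n}p(A^kx)$ only involve these actions (the topology of $F$ being that induced by $\circledast$), the two projective limits carry the same topology and $A_\infty=(A_F)_\infty$. Finally, for part (d) I would combine the closure-stabilization of the first paragraph with the density of $D_\infty(A)$ in $F$: through the resolvent behaviour established in (b), the hypothesis $(EQ)$ makes $A_F$ well enough behaved that $D_\infty(A)=D_\infty(A_F)$ is dense in $F$, i.e. $\overline{D_\infty(A)}=F$. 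Since $\overline{D(A^k)}=F$ exactly when $k\geq n$ and $\overline{D(A^{k})}\supsetneq F$ for $k<n$, the smallest $k$ for which $\overline{D(A^k)}$ lands inside $\overline{D_\infty(A)}=F$ is exactly $n(A)$. The one delicate point is this density of $D_\infty(A)$ in $F$, which genuinely uses $(EQ)$ and not stationarity alone.
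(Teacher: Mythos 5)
Parts a) and c) of your proposal are correct and are essentially the paper's own arguments: for c) the paper uses the sandwich $D(A^{n+k})\hookrightarrow D(A_{F}^{k})\hookrightarrow D(A^{k})$ for all $k$, which is equivalent to your direct set-theoretic argument, and for a) your version (working with $D(A^{n+1})\subseteq D(A_F)$ and the stabilization $\overline{D(A^{n+1})}=\overline{D(A^{n})}=F$) is if anything more careful than the paper's one line. In d) your logical skeleton --- stabilization of closures plus minimality of $n(A)$, with the crux being density of $D_{\infty}(A)$ in $F$ (reading the statement, as you implicitly do, with $\overline{D_{\infty}(A)}$ in place of $D_{\infty}(A)$) --- is right, and your remark that this density genuinely needs $(EQ)$ and not stationarity alone is correct and is a point the paper's one-line proof of d) passes over in silence: stationarity only yields $D(A^{n})\subseteq\bigcap_{k}\overline{D(A^{k})}$, which may be strictly larger than $\overline{D_{\infty}(A)}$.

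The genuine gap is in b). You correctly isolate the mechanism --- fix $\mu\in\rho(A)$ and use that $R(\mu:A)^{n}$ maps $E$ into $D(A^{n})\subseteq F$ --- but then you stop: no candidate inverse of $\lambda-A$ is written down, no verification that it is a two-sided inverse, and no derivation of the polynomial estimate; instead you declare that this step ``must be imported'' from \cite{ku101} and \cite{ush}. That step is precisely the content this part of the lemma is supposed to supply, and the paper does supply it: it inverts $\lambda-A$ via the formula $R(\lambda:A)=(\mu-A)^{n}R(\lambda:A_F)R(\mu:A)^{n}$, and it obtains the factor $(1+|\lambda|)^{n}$ not by ``descending the tower of closures'' but by a graph-seminorm computation, expanding $A^{i}R(\lambda:A_F)y=\lambda^{i}R(\lambda:A_F)y-\bigl(\lambda^{i}-A^{i}\bigr)R(\lambda:A_F)y$ and factoring $\lambda^{i}-A^{i}=(\lambda-A)\sum_{j=0}^{i-1}\lambda^{j}A^{i-1-j}$, which gives $p_{n}(R(\lambda:A_F)y)\leq C(1+|\lambda|)^{n}\bigl(p(R(\lambda:A_F)y)+p_{n-1}(y)\bigr)$. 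For what it is worth, your resolvent-identity idea can be turned into a complete alternative: iterating $R(\lambda)=R(\mu)+(\mu-\lambda)R(\lambda)R(\mu)$ suggests the candidate $R:=\sum_{j=0}^{n-1}(\mu-\lambda)^{j}R(\mu:A)^{j+1}+(\mu-\lambda)^{n}R(\lambda:A_F)R(\mu:A)^{n}$, and a telescoping computation shows this is a two-sided inverse of $\lambda-A$ which visibly carries the weight $(1+|\lambda|)^{n}$; but none of this verification appears in your proposal, so as written part b) is asserted rather than proved.
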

\begin{proof}\begin{itemize}
\item[a)] It is obvious since $D(A^n)$ is dense in $F$ and $D(A^n)\subset D(A_F)$.
\item[b)]
Since $D(A^n)\subset D(A)$, $F$ is invariant under resolvent of $A$, we obtain $\rho(A;L(E))\subset\rho(A_F;L(F))$. Now, let $\lambda\in\rho(A_F;L(F))$. Then $\lambda-A$ is injective, so $R(\lambda:A)$ is an extension of $R(\lambda:A_F)$. Let $\mu\in\rho(A)$. By $R(\lambda:A)=(\mu-A)^nR(\lambda:A)R(\mu:A)^n$, we obtain that $\lambda\in\rho(A:L(E))$.\\

It holds, for $x\in E$ and all $p\in\circledast$, $$p(R(\lambda:A)x)\leq p_n(R(\mu:A)^nx)p_n(R(\lambda:A_F)x)p(R(\mu:A)^nx)$$
where $p_n(x)=\sup\limits_{p\in\circledast}\sum\limits_{i=1}^{n}p(A^ix)$. Note that $(D(A^n),p_n)$ is a Banach space. For $x\in D(A^n)$ we have
$$p_n(R(\lambda:A_F)x)=\sup\limits_{p\in\circledast}\sum\limits_{i=0}^{n}p(A^iR(\lambda:A_F)x)=$$
$$=\sup\limits_{p\in\circledast}\sum\limits_{i=0}^{n}p(({\lambda}^k+A^k-{\lambda}^k)R(\lambda:A_F)x)\leq$$
$$\leq\sup\limits_{p\in\circledast}\sum\limits_{i=0}^{n}{\Biggl(}{|\lambda|}^k
p(R({\lambda}:A_F)x)+
\sum\limits_{j=0}^{i-1}{|\lambda|}^j p(A^{i-1-j}x){\Biggr)}.$$
The last one inequality, together with the previous one gives the statement of the lemma.

\item[c)]It holds $D(A^{n+k})\hookrightarrow D((A_F)^k)\hookrightarrow D(A^k)$, which one holds for all $k\in{\mathbb N}_0$. Then $$\bigcap\limits_{k=1}^{\infty}D(A^{n+k})\hookrightarrow\bigcap\limits_{k=1}^{\infty}D(A_F^k)
\hookrightarrow\bigcap\limits_{k=1}^{\infty}D(A^k),$$ which gives that $D_{\infty}(A)$ and $D_{\infty}(A_F)$ coincide topologically and $A_{\infty}=(A_F)_{\infty}$.
\item[d)] From the previous lemma, $D_{\infty}(A)=D_{\infty}(A_F)$ and $D(A^{n+1})\subset D(A_F)$, we obtain the conclusion of the lemma.\end{itemize}\end{proof}

\begin{thm}\label{eqeq} Let $A$ be a stationary dense operator in $E$ with non-empty resolvent. Then $\rho(A;L(E))=\rho(A_{\infty};L(D_{\infty}(A)))$.\end{thm}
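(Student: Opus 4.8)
The goal is to prove the resolvent sets of $A$ (as an operator on $E$) and of $A_\infty$ (as an operator on the Fréchet space $D_\infty(A)$) coincide. Let me think through how to establish the two inclusions.

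First, consider the easy inclusion $\rho(A;L(E))\subseteq\rho(A_\infty;L(D_\infty(A)))$. Since $D_\infty(A)=\bigcap_{n=1}^\infty D(A^n)$ is invariant under $R(\lambda:A)$ for any $\lambda\in\rho(A)$ (because $R(\lambda:A)$ commutes with $A$ on its domain and each application preserves membership in every $D(A^n)$), the restriction of $R(\lambda:A)$ to $D_\infty(A)$ furnishes a two-sided inverse for $\lambda-A_\infty$. The plan is to verify that this restricted resolvent maps $D_\infty(A)$ continuously into itself in the projective-limit topology given by the seminorms $p_n$; this follows from the commutation relation $A^k R(\lambda:A)x=R(\lambda:A)A^k x$ on $D_\infty(A)$, which bounds $p_n(R(\lambda:A)x)$ by a resolvent bound on $p_n(x)$. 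Hence $\lambda\in\rho(A_\infty;L(D_\infty(A)))$.

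The substantive inclusion is $\rho(A_\infty;L(D_\infty(A)))\subseteq\rho(A;L(E))$, and this is where the stationary-density hypothesis does the work. First I would reduce to the operator $A_F$ on the closure $F=\overline{D(A^n)}$ with $n=n(A)$: by Lemma~\ref{lema1234}(c), the Fréchet spaces $D_\infty(A)$ and $D_\infty(A_F)$ coincide topologically and $A_\infty=(A_F)_\infty$, so it suffices to compare $\rho((A_F)_\infty)$ with $\rho(A_F;L(F))$ and then invoke Lemma~\ref{lema1234}(b), which already gives $\rho(A;L(E))=\rho(A_F;L(F))$. Thus the whole theorem collapses to the densely defined case, where $A_F$ is densely defined in $F$ by part (a). For a densely defined operator the resolvent set of the operator and of its $C^\infty$-part agree by a standard argument: if $\lambda-(A_F)_\infty$ is bijective on $D_\infty(A_F)$ with continuous inverse, one extends the inverse to all of $F$ by density, using the resolvent bound in Lemma~\ref{lema1234}(b) together with the fact that $D_\infty(A_F)$ is dense in each $D(A_F^k)$, and checks that the extension inverts $\lambda-A_F$ on $F$.

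The main obstacle I anticipate is the passage from an inverse defined only on the Fréchet space $D_\infty(A)$ to a bounded inverse on all of the larger space. Knowing that $\lambda-A_\infty$ is invertible gives control only on vectors lying in \emph{every} $D(A^k)$, whereas invertibility of $\lambda-A$ requires surjectivity onto all of $E$ and a bound valid for arbitrary $x\in E$; bridging this gap is exactly what stationary density and the quantitative estimate of Lemma~\ref{lema1234}(b),
$$
p(R(\lambda:A)x)\leq C(1+|\lambda|)^n\bigl(p(R(\lambda:A_F)x)+1\bigr),
$$
are designed to supply. I would therefore organize the argument so that all the analytic content is funneled through that estimate, with the reduction via Lemma~\ref{lema1234}(c) handling the topological identification $A_\infty=(A_F)_\infty$ and the density statement in part (a) supplying the approximation needed to close the inclusion.
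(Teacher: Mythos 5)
Your overall architecture coincides with the paper's: Lemma~\ref{lema1234}(a) makes $A_F$ densely defined in $F$, Lemma~\ref{lema1234}(b) gives $\rho(A;L(E))=\rho(A_F;L(F))$, Lemma~\ref{lema1234}(c) identifies $A_\infty$ with $(A_F)_\infty$ on $D_\infty(A)=D_\infty(A_F)$, and the theorem collapses to the densely defined case; the paper then disposes of that case by citing the proof of \cite[Theorem 2.3]{ku101}. Your easy inclusion $\rho(A;L(E))\subseteq\rho(A_\infty;L(D_\infty(A)))$ is also fine. The problem is the argument you substitute for the citation, namely that $(\lambda-(A_F)_\infty)^{-1}$ ``extends to all of $F$ by density.''

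Two things go wrong there. First, the estimate you invoke from Lemma~\ref{lema1234}(b), $p(R(\lambda:A)x)\leq C(1+|\lambda|)^n(p(R(\lambda:A_F)x)+1)$, has $R(\lambda:A_F)$ on its right-hand side: it presupposes $\lambda\in\rho(A_F;L(F))$, which is exactly what is to be proved, so using it to construct $R(\lambda:A_F)$ is circular. Second, and more fundamentally, extension by density fails on topological grounds: invertibility of $\lambda-(A_F)_\infty$ only yields continuity of the inverse for the Fr\'echet topology of $D_\infty(A_F)$, i.e. a bound of the form $p\bigl((\lambda-A_\infty)^{-1}y\bigr)\leq C\,p_m(y)$ in which seminorms of $A^jy$ up to some finite order $m$ occur on the right. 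An operator continuous only for this strictly finer topology cannot be extended to $F$ merely because $D_\infty(A_F)$ is dense in $F$ for the topology of $E$. The missing device --- which is the actual content of Kunstmann's proof, and the only place where the hypothesis $\rho(A)\neq\emptyset$ genuinely enters --- is to fix $\mu\in\rho(A)$ and pass to the composition $(\lambda-A_\infty)^{-1}R(\mu:A)^{m}$: since $A^{j}R(\mu:A)^{m}=(\mu R(\mu:A)-I)^{j}R(\mu:A)^{m-j}$ is continuous on $E$ for $j\leq m$, this composition \emph{is} continuous for the topology induced by $E$ on $D_\infty(A_F)$, hence extends to $F$ by density of $D_\infty(A_F)$ in $F$; one then recovers the candidate resolvent from the iterated resolvent identity $(\lambda-A_\infty)^{-1}=\sum_{j=1}^{m}(\mu-\lambda)^{j-1}R(\mu:A)^{j}+(\mu-\lambda)^{m}(\lambda-A_\infty)^{-1}R(\mu:A)^{m}$ on $D_\infty(A_F)$, and verifies the inverse property using closedness of $A_F$. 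Your sketch never uses a fixed $\mu\in\rho(A)$ at this stage, so the passage from $D_\infty(A_F)$ to $F$ does not close; moreover the auxiliary fact you lean on, density of $D_\infty(A_F)$ in each $D(A_F^{k})$ (for the graph topology), is itself nontrivial and would need proof.
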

\begin{proof}
By Lemma \ref{lema1234} a) we have that $A_F$ is dense in $F$ and by b) from the same lemma it has non-empty resolvent. Then using the proof of ~\cite[Theorem 2.3]{ku101} and Lemma \ref{lema1234} c), we obtain that $\rho(A;L(E))=\rho(A_{\infty};L(D_{\infty}(A)))$.
\end{proof}
Next example show us if $A$ is not stationary dense operator in $E$, then the conclusion of Theorem \ref{eqeq} does not hold. \begin{example}
Let we define the space $S_j$ as $$S_j=\{\varphi\in{\mathcal C}^{\infty}({\mathbb R})\, :\, p_j(x)=\sup\limits_{\alpha+\beta\leq j}\|x^{\beta}D^{\alpha}\varphi(x)\|_{L^2({\mathbb R})}<\infty\}.$$ Then the test space for tempered distributions $S({\mathbb R})$ can be defined as ${\mathcal S}({\mathbb R})=\lim\limits_{j\rightarrow\infty}\mbox{proj} S_j$. Let $E={\mathcal S}({\mathbb R})$, ($E$ is a Fr\'echet space as a projective limit of Banach spaces, so $E$ is a sequentially complete locally convex space). Define $A=-\frac{d}{dt}$ on $E$ with domain $D(A)=\{f\in E\, :\, f(0)=0\}$. The operator $A$ is not stationary dense on $E$. Note that $D_{\infty}(A)=\{0\}$ and $\rho(A_{\infty})=\{\lambda\in{\mathbb C}\, :\, \lambda\neq 0\}$. For $f\in E$, $\lambda\in{\mathbb C}$, and $\Re\lambda>s$, we have $$(\lambda-A)^{-1}f=\int\limits_0^{\infty}e^{-(\lambda-s)t}f(t)\, dt$$ belongs in $E$. Then $\rho(A)=\{\lambda\in{\mathbb C}\, :\, \Re\lambda>s\}$. Therefore, we obtain that the conclusion of Theorem \ref{eqeq} does not hold. The same conclusion can be made in the ultradistribution case.
Consider the spaces $$E_{(h)}=\{f\in C^{\infty}({\mathbb R})\, :\, f(0)=0,\, p_n(f)=\sup\limits_{k\in{\mathbb N}_0}\sup\limits_{t\geq k}\frac{h^n|{f}^{(n)}(t)|}{M_n}<+\infty\}$$
$$E_{\{h\}}=\{f\in C^{\infty}({\mathbb R})\, :\, f(0)=0,\, p_n(f)=\sup\limits_{k\in{\mathbb N}_0}\sup\limits_{t\geq k}\frac{h^n|{f}^{(n)}(t)|}{M_n\prod\limits_{i=1}^nr_i}<+\infty\}$$ for $(r_i)$ monotonically increasing positive sequence and $(M_n)$ satisfying $(M.1)$ and $(M.3)'$. The spaces $E_{(h)}$ and $E_{\{h\}}$ are Fr\'echet spaces. Let $A=-\frac{d}{ds}$ with domain $D(A)=\{f\in{E_h}\, : \, f(0)=0, Af\in E_h\}$, where $E_h$ stands for both spaces. Let $p_n\in\circledast$ be a seminorm in $E_{\{h\}}$.
The previous consideration in distribution case for $E={\mathcal S}({\mathbb R})$ is similar and more simple then the  case with spaces $E_{(h)}$ and $E_{\{h\}}$.
\end{example}
\begin{thm}\label{kor1} Let $A$ be a stationary dense operator in a sequentially complete locally convex space $E$, $n=n(A)$ and $F=\overline{D(A^n)}$. Then $A$ generates a distribution semigroup in $E$ if and only if $A_F$ generates a distribution semigroup in $F$.\end{thm}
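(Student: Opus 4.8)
The plan is to reduce the statement to the resolvent characterization of distribution semigroup generators furnished by Theorem \ref{strucdis}, namely the equivalence (a) $\Leftrightarrow$ (e): an operator generates a (DSG) precisely when its resolvent set contains a logarithmic region $\Omega_{\alpha,\beta}$ and the resolvent grows there at most polynomially, i.e. for every $p\in\circledast$ there are $q\in\circledast$, $C>0$ and $k>0$ with $p(R(\lambda:A)x)\leq C(1+|\lambda|)^k q(x)$ for all $\lambda\in\Omega_{\alpha,\beta}$. Since $A_F$ is densely defined in $F$ by Lemma \ref{lema1234} a), this criterion is available for $A_F$ in $F$ as well, so it suffices to show that $A$ satisfies (e) in $E$ if and only if $A_F$ satisfies (e) in $F$.

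First I would record that the spectral part of (e) is common to both operators: by Lemma \ref{lema1234} b) one has $\rho(A;L(E))=\rho(A_F;L(F))$, so the inclusion $\rho(A)\supset\Omega_{\alpha,\beta}$ holds for $A$ exactly when it holds for $A_F$. Thus only the transfer of the polynomial growth bound between $E$ and $F$ remains to be handled.

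For the direction ($\Rightarrow$), suppose $A$ generates a (DSG), so (e) holds in $E$. Because $F$ is invariant under $R(\lambda:A)$ and $R(\lambda:A_F)$ is the restriction of $R(\lambda:A)$ to $F$ (as used in the proof of Lemma \ref{lema1234} b)), for $x\in F$ we have $R(\lambda:A_F)x=R(\lambda:A)x$; hence the estimate for $A$ restricts verbatim to $F$ with the induced seminorms, giving (e) for $A_F$ and therefore a (DSG) in $F$. For the converse ($\Leftarrow$), assume (e) for $A_F$ in $F$. Applying the comparison inequality of Lemma \ref{lema1234} b), $p(R(\lambda:A)x)\leq C(1+|\lambda|)^n\bigl(p(R(\lambda:A_F)x)+1\bigr)$, and substituting the polynomial bound available for $R(\lambda:A_F)$, I would conclude that $p(R(\lambda:A)x)$ is again dominated by a constant times $(1+|\lambda|)^{k'}$ on $\Omega_{\alpha,\beta}$, the extra weight $(1+|\lambda|)^n$ and the additive constant only raising the polynomial order. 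Hence $A$ satisfies (e) and generates a (DSG) in $E$.

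The main obstacle I anticipate is the bookkeeping of the locally convex seminorm structure in the ($\Leftarrow$) direction: one must check that the additive ``$+1$'' and the polynomial weight $(1+|\lambda|)^n$ appearing in Lemma \ref{lema1234} b) can indeed be absorbed into a single estimate of the form required by (e), uniformly over the region $\Omega_{\alpha,\beta}$ and over the seminorms $p\in\circledast$, rather than a mere operator-norm bound as in the Banach space case of \cite{ku101}. Verifying that the matching $p\mapsto q$ of seminorms produced by Lemma \ref{lema1234} b) is compatible with the one demanded by Theorem \ref{strucdis}(e) is the delicate point; the spectral inclusion and the ($\Rightarrow$) direction are essentially formal.
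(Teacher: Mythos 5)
Your proposal is correct and takes essentially the same route as the paper, whose entire proof is the one-line citation of Theorem \ref{strucdis}, Lemma \ref{lema0} and Lemma \ref{lema1234} b): you simply make explicit the reduction to the resolvent characterization (e) and the transfer of the polynomial estimates between $E$ and $F$ via Lemma \ref{lema1234} b). (The paper's additional citation of Lemma \ref{lema0} is superfluous here since stationary density is a hypothesis, and the ``$+1$'' you flag is indeed harmless --- it can be removed by homogeneity, replacing $x$ by $tx$ and letting $t\to\infty$, rather than by raising the polynomial order.)
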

\begin{proof}The proof of this theorem is direct consequence of Theorem \ref{strucdis}, Lemma \ref{lema0} and Lemma \ref{lema1234} b). \end{proof}
Following two results are due to T. Ushijima and we can restate it in locally convex case as following.
\begin{thm}\label{pom1} Let $E$ be a sequentially complete locally convex space and $A$ be a closed operator with dense $D(A^{\infty})$. Then $A$ has the property $(EQ)$ if and only if there exists logarithmic region $\Omega_{\alpha,\beta}$ and $k\in{\mathbb N}$ and $C>0$ such that $$ ||R(\lambda : A)||\leq C(1+|\lambda|)^k, \quad \lambda\in \Omega_{\alpha,\beta}.$$
\end{thm}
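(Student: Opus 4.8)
The statement is an equivalence, so I would prove the two implications separately, reworking the proof of \cite[Theorem 4.1]{ush} in the present generality. The two tools I would lean on are the Yosida characterization of equicontinuous $C_0$-semigroups on a sequentially complete (here Fr\'echet) space---a closed, densely defined $B$ generates such a semigroup precisely when $(0,\infty)\subset\rho(B)$ and $\{\lambda^{m}R(\lambda:B)^{m}:\lambda>0,\ m\in\NN\}$ is equicontinuous---and the complex-inversion (Laplace) representation of the resolvent. Two preliminary remarks guide the bookkeeping: first, the inequality $\norm{R(\lambda:A)}\le C(1+\abs{\lambda})^{k}$ must be read seminorm-wise, i.e. for every $p\in\circledast$ there is $q\in\circledast$ with $p(R(\lambda:A)x)\le C(1+\abs{\lambda})^{k}q(x)$; second, $A_{\infty}\in L(D_{\infty}(A))$ is continuous, since $p_{n}(A_{\infty}x)\le p_{n+1}(x)$, so the seminorms $p_{n}$ of $D_{\infty}(A)$ already encode the graph quantities $p(A^{i}\cdot)$, and this is exactly the mechanism that will absorb the polynomial factor $(1+\abs{\lambda})^{k}$.

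For the implication ``resolvent estimate $\Rightarrow(EQ)$'' I would construct the candidate semigroup on $D_{\infty}(A)$ by
$$
T_{\infty}(t)x=\frac{1}{2\pi i}\int_{\Gamma}e^{\lambda t}R(\lambda:A)x\,d\lambda,\qquad x\in D_{\infty}(A),
$$
with $\Gamma$ the upward-oriented boundary of $\Omega_{\alpha,\beta}$. The crucial point is convergence of this integral in every seminorm $p_{n}$. Here I would use that for $x\in D_{\infty}(A)$ one has the finite Neumann expansion
$$
R(\lambda:A)x=\sum_{j=0}^{N-1}\frac{A^{j}x}{\lambda^{j+1}}+\frac{1}{\lambda^{N}}R(\lambda:A)A^{N}x,
$$
whose remainder is bounded by $C\abs{\lambda}^{k-N}q(A^{N}x)$; since $A^{N}x$ exists for every $N$, one may choose $N$ large enough (in terms of $n$ and $t$) that, against the growth $\abs{e^{\lambda t}}=e^{\beta t}(1+\abs{\IM\lambda})^{\alpha t}$ along $\Gamma$, the integrand is integrable, and applying the same estimate to $A^{i}T_{\infty}(t)x=\frac{1}{2\pi i}\int_{\Gamma}e^{\lambda t}R(\lambda:A)A^{i}x\,d\lambda$ yields convergence in each $p_{n}$. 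I would then verify the semigroup law by the usual contour-shift/residue computation, strong continuity as $t\downarrow0$ from the polynomial part of the expansion, and the equicontinuity of $\{T_{\infty}(t)\}$ from the uniform seminorm bounds, concluding that the generator is $A_{\infty}$ and that $(EQ)$ holds.

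For the converse I would assume $(EQ)$ and run Yosida's theorem on $D_{\infty}(A)$: the equicontinuous semigroup $T_{\infty}(t)$ gives $\{\RE\lambda>0\}\subset\rho(A_{\infty})$, the representation $R(\lambda:A_{\infty})=\int_{0}^{\infty}e^{-\lambda t}T_{\infty}(t)\,dt$, and equicontinuity of $\{\lambda^{m}R(\lambda:A_{\infty})^{m}\}$; in particular, for each $n$ there is $m$ with $p_{n}(R(\lambda:A_{\infty})x)\le(\RE\lambda)^{-1}p_{m}(x)$. Since $R(\lambda:A_{\infty})$ is the restriction of $R(\lambda:A)$ to the dense subspace $D_{\infty}(A)$, I would unwind these $D_{\infty}$-estimates into $E$-estimates for $R(\lambda:A)$ and fill in the region to the left of $\RE\lambda=0$ by a Neumann-series perturbation argument; the polynomial factor $(1+\abs{\lambda})^{k}$ and the widening of the resolvent set from a half-plane to a logarithmic region $\Omega_{\alpha,\beta}$ arise exactly from the loss of finitely many powers of $A$ when one passes from the $A$-adapted seminorms $p_{n}$ back to a single $p\in\circledast$. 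Alternatively, one can route this direction through the structural Theorem~\ref{strucdis}, identifying $(EQ)$ with $A$ generating a (DSG) and invoking the equivalence (a)$\Leftrightarrow$(e).

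The main obstacle, in both directions, is the interface between $A$ acting on $E$ and $A_{\infty}$ acting on $D_{\infty}(A)$. Concretely, the delicate step in the converse is transferring the clean half-plane, equicontinuous resolvent bound for $A_{\infty}$ on the Fr\'echet space into the logarithmic-region, polynomially-bounded resolvent of $A$ on $E$---equivalently, explaining why a genuine logarithmic region, rather than a half-plane, is the natural domain of validity once one measures in a fixed $p\in\circledast$ instead of the seminorms $p_{n}$. In the forward direction the corresponding difficulty is securing \emph{global} equicontinuity (uniform over all $t$ and all resolvent powers) rather than merely local bounds, since the naive expansion above loses $k+1$ powers of $A$ per resolvent; controlling this uniformly is precisely where the $A$-adapted structure of the $p_{n}$ and the geometry of $\Gamma$ must be exploited carefully.
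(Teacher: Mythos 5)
First, a point of reference: the paper itself offers \emph{no} proof of this statement --- it is presented as a restatement of Ushijima's Theorem 4.1 \cite{ush} in the locally convex setting, with the argument omitted. So your proposal can only be judged against what is needed to make the claim rigorous, and there your forward direction hits a genuine obstruction, not just a bookkeeping difficulty. The property $(EQ)$, as defined in the paper, demands bounds $p(T_{\infty}(t)x)\le C\,q(x)$ uniform in $t\ge 0$. Your contour construction cannot deliver this: along $\Gamma=\partial\Omega_{\alpha,\beta}$ one has $|e^{\lambda t}|=e^{\beta t}(1+|\IM\lambda|)^{\alpha t}$, so the truncation order $N$ and all resulting constants necessarily grow with $t$, and what comes out is only equicontinuity on compact $t$-intervals, i.e. that $A_{\infty}$ generates a locally equicontinuous $C_{0}$-semigroup on the Fr\'echet space $D_{\infty}(A)$ --- which is the form in which Ushijima's theorem actually holds. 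Indeed, global equicontinuity cannot follow from the stated hypothesis at all: take $E=\CC$ and $A=1$ (or $A=I$ on any Banach space). Then $A$ is closed, $D(A^{\infty})=\CC$ is dense, $\rho(A)\supset\Omega_{1,2}$ and $\|R(\lambda:A)\|\le 1$ there, yet $T_{\infty}(t)=e^{t}$ on $D_{\infty}(A)=\CC$ is not equicontinuous. So the implication ``resolvent estimate $\Rightarrow(EQ)$'' is false as literally stated, and the difficulty you flag in your final paragraph (``securing global equicontinuity'') is not something careful exploitation of the geometry of $\Gamma$ can overcome; the statement must first be corrected (to local equicontinuity, or to quasi-equicontinuity in the exponential/half-plane variant) before that direction can be completed.

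The converse half also leaves the crux unproved. Yosida theory on $D_{\infty}(A)$ gives you $R(\lambda:A_{\infty})$ as an operator on the Fr\'echet space, but the theorem asserts $\lambda\in\rho(A)$, i.e. bijectivity of $\lambda-A$ on all of $E$ with an inverse that is polynomially bounded in the seminorms $\circledast$ of $E$. Passing from the dense subspace $D_{\infty}(A)$ to $E$ requires $E$-seminorm bounds on $R(\lambda:A_{\infty})$ together with closedness of $A$ (approximate $x\in E$ by $x_{n}\in D(A^{\infty})$, show $R(\lambda:A_{\infty})x_{n}$ is Cauchy in $E$, pass to the limit); if one only has equicontinuity with respect to the graph seminorms $p_{n}$, this transfer is exactly where the polynomial loss and the logarithmic region genuinely arise, and your sketch names this interface but supplies no mechanism for it. Two further points. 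The ``Neumann-series perturbation to fill in the region to the left of $\RE\lambda=0$'' is misdirected: a logarithmic region with $\beta>0$ lies entirely inside the half-plane $\{\RE\lambda\ge\beta\}$, so once a half-plane bound is in hand nothing remains to be filled in, and in any case Neumann perturbation enlarges $\rho(A)$ only by discs of radius comparable to $\|R(\lambda:A)\|^{-1}$, which never manufactures a logarithmic region. Finally, your proposed alternative route through Theorem \ref{strucdis} is circular within this paper's architecture: the identification of $(EQ)$ with generation of a (DSG) is established in the theorem that follows, whose implication $iii)\Rightarrow iv)$ is itself deduced from the present statement.
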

\begin{thm} Let $E$ be a sequentially complete locally convex space and $A$ linear operator on $E$. The following conditions are equivalent:
\begin{itemize}
\item[i)] $A$ is the generator of a distribution semigroup $G$;
\item[ii)] $A$ is well-posed and densely defined;
\item[iii)] $A$ satisfy the condition $(EQ)$;
\item[iv)] $A$ is densely defined, and there exist an adjoint logarithmic region $\Omega_{\alpha,\beta}$ and $k\in{\mathbb N}$ and $C>0$ such that
$$ ||R(\lambda : A)||\leq C(1+|\lambda|)^k, \quad \lambda\in \Omega_{\alpha,\beta}.$$
\end{itemize}
\end{thm}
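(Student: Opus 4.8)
The plan is to prove the four statements equivalent by routing everything through condition (iv), using the structural Theorem~\ref{strucdis} and Theorem~\ref{pom1} (which already carry the heavy analytic content), and by treating the well-posedness clause (ii) separately via the mild-solution correspondence of Proposition~\ref{pc-kunst-isto}. Concretely, I would establish the three links (i)~$\Leftrightarrow$~(iv), (iii)~$\Leftrightarrow$~(iv), and (i)~$\Leftrightarrow$~(ii); together these close the equivalence.

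First I would dispose of (i)~$\Leftrightarrow$~(iv). Reading the defining axioms of a distribution semigroup, axioms (iv)--(v) force $\overline{D(A)}=E$, so a generator of a (DSG) is automatically densely defined. Condition (i) is exactly statement (a) of Theorem~\ref{strucdis}, while the resolvent estimate $\|R(\lambda:A)\|\leq C(1+|\lambda|)^k$ on $\Omega_{\alpha,\beta}$ in (iv) is exactly statement (e). Since Theorem~\ref{strucdis} yields (a)~$\Leftrightarrow$~(d)~$\Leftrightarrow$~(e), the conditions (i) and (iv) differ only in the density clause, which is automatic in (i) and imposed in (iv); this gives (i)~$\Leftrightarrow$~(iv). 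Next, for (iii)~$\Leftrightarrow$~(iv) I would invoke Theorem~\ref{pom1} directly: for a closed operator with dense $D(A^{\infty})$, property $(EQ)$ holds iff there is a logarithmic region on which the polynomial resolvent bound holds, which is precisely (iv). The only point needing care is the density of $D_{\infty}(A)$ required to apply Theorem~\ref{pom1}: under (iv) the bound places $A$ in the setting of Theorem~\ref{strucdis}(c), whose convoluted-semigroup structure maps $E$ smoothly into $[D(A)]$ and hence makes $D_{\infty}(A)$ dense, while under (iii) the equicontinuous semigroup generated by $A_{\infty}$ on $D_{\infty}(A)$ supplies the same density. With density secured, Theorem~\ref{pom1} delivers (iii)~$\Leftrightarrow$~(iv).

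It then remains to connect well-posedness, i.e. (ii), to the generation of a (DSG). The implication (i)~$\Rightarrow$~(ii) is the easy half: if $A$ generates $G$, then by Theorem~\ref{strucdis}(c) it generates a non-degenerate convoluted semigroup, and the explicit formula $u(t;x)=S_{n}(t)A^{n}x+\sum_{j=0}^{n-1}\frac{t^{j}}{j!}A^{j}x$ from the proof of Proposition~\ref{pc-kunst-isto} exhibits, for every $x$ in a dense subspace, a unique mild solution of $(ACP_1)$ depending continuously on $x$; combined with dense definition this is (ii). The reverse implication (ii)~$\Rightarrow$~(i) is where the actual construction lives: from the well-posedness of $(ACP_1)$ one must manufacture the distribution $G$, define $G(\varphi)$ from the solution operators, and verify the axioms (i)--(v). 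Here I would follow Ushijima's construction (\cite{ush}), using sequential completeness of $E$ to guarantee that the relevant integrals and limits exist in $E$, and the seminorm estimates of Lemma~\ref{lema1234}~b) to transfer the Banach-space reasoning to the family $\circledast$; the uniqueness part of well-posedness is what yields axiom (v).

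The main obstacle is precisely this last step, (ii)~$\Rightarrow$~(i). In the locally convex setting there is no operator norm to control $G(\varphi)$, so the multiplicativity axiom $G(\varphi\ast\psi)=G(\varphi)G(\psi)$ and the density/non-degeneracy axioms must all be verified through the directed seminorm system $\circledast$, invoking sequential completeness rather than completeness to ensure that the defining integrals converge and that $A_{\infty}$ on $D_{\infty}(A)$ behaves as required. Once this construction is carried out, the links (i)~$\Leftrightarrow$~(ii), (i)~$\Leftrightarrow$~(iv), and (iii)~$\Leftrightarrow$~(iv) together establish the equivalence of all four conditions.
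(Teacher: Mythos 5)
Your architecture differs substantially from the paper's, and part of the difference is legitimate economy. The paper takes i) $\Leftrightarrow$ ii) and iv) $\Rightarrow$ i) from Theorem \ref{strucdis}, takes iii) $\Rightarrow$ iv) from Theorem \ref{pom1}, defers iv) $\Rightarrow$ ii) to Ushijima, and then spends essentially all of its effort on a direct proof of i) $\Rightarrow$ iii): it identifies $G(\varphi)x$ with the generalized resolvent of ${\mathbf A}_\infty$ applied to $1\otimes x$, introduces $R_\infty(\lambda)=G(f_\lambda)$ with the polynomial seminorm bound, and closes with a contour-integral computation showing $\int_0^\infty\varphi(t)T_\infty(t)x\,dt=G(\varphi)x$. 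You bypass all of this by using Theorem \ref{pom1} as a two-sided equivalence iii) $\Leftrightarrow$ iv); since \ref{pom1} is indeed stated as an ``if and only if,'' that routing is admissible and shorter --- but it means your proof contains none of the analysis that is the paper's actual content, and it inherits the burden of verifying \ref{pom1}'s hypothesis that $D(A^{\infty})$ is dense. Your mechanism for that under (iv) is flawed: you place yourself ``in the setting of Theorem \ref{strucdis}(c),'' but the implication list of \ref{strucdis} contains (c) $\Rightarrow$ (d) and (a)$'$ $\Rightarrow$ (c)$'$, never (a) $\Rightarrow$ (c) or (e) $\Rightarrow$ (c). The correct route is iv) $\Rightarrow$ i) and then density of ${\mathcal R}(G)\subseteq D(A^{\infty})$, which is exactly how the paper opens its proof of i) $\Rightarrow$ iii). (Your density claim under (iii) --- that $(EQ)$ itself supplies density of $D_\infty(A)$ in $E$ --- does not follow from the stated definition of $(EQ)$; this gap is, to be fair, shared with the paper's own appeal to \ref{pom1}.)

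The genuine gap is your treatment of ii). You interpret well-posedness through mild solutions of $(ACP_1)$, and consequently must prove ii) $\Rightarrow$ i) by constructing the distribution $G$ from the solution operators and verifying axioms (i)--(v). You correctly identify this as ``where the actual construction lives,'' and then you do not carry it out: deferring it wholesale to ``Ushijima's construction,'' with sequential completeness and the seminorm system invoked only in general terms, leaves the hardest link of your cycle unproved. The paper never needs this construction, because it reads well-posedness as the existence of a distribution fundamental solution with trivial null space, i.e.\ as condition (d) of Theorem \ref{strucdis}, so that i) $\Leftrightarrow$ ii) is immediate from (a) $\Leftrightarrow$ (d); its only deferral to Ushijima is the implication iv) $\Rightarrow$ ii). Your forward direction i) $\Rightarrow$ ii) has the same defect noted above: it feeds Proposition \ref{pc-kunst-isto} with an integrated/convoluted semigroup obtained from \ref{strucdis}(c), an implication from (a) that the theorem does not assert. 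In short, your i) $\Leftrightarrow$ iii) $\Leftrightarrow$ iv) triangle is repairable, but the link to ii) as you have set it up is missing its proof --- and under the paper's reading of ``well-posed'' it would have required essentially no proof at all.
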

\begin{proof}
We will prove $i)\Rightarrow ii)\Rightarrow iii)\Rightarrow iv)\Rightarrow i)$. The statements $i)\Leftrightarrow ii)$ and $iv)\Rightarrow i)$ follow from Theorem \ref{strucdis} and $iii)\Rightarrow iv)$ follows by Theorem \ref{pom1}  It remains to show that $i)\Rightarrow iii)$ and $iv)\Rightarrow ii)$.\\
$i)\Rightarrow iii)$: By the definition of the distribution semigroup, we can conclude that $D(A^{\infty})$ contains
${\mathcal R}(G)$. Since ${\mathcal R}(G)$ is dense in $E$, $D(A^{\infty})$ is dense in $E$. By the results in the third section from \cite{ush1}, we obtain that $$({\boldsymbol\lambda}-{\mathbf A}_{\infty})^{-1}(1\otimes x)(\hat{\varphi})=G(\varphi)x,$$ for any $\varphi\in\DD$ and $x\in D_{\infty}(A)$. Note that with $({\boldsymbol\lambda}-{\mathbf A}_{\infty})^{-1}$ is denoted the generalized resolvent. Let $f_{\lambda}(t)=\mu(t)e^{-\lambda t}$, where $\mu\in\DD$ and $\mu(t)=1$ for $|t|\leq1$. Define the operator $R_{\infty}(\lambda)=G(f_{\lambda})$. Since $G\in\DD'_0(L(E))$, for all $p\in\circledast$, there exists $q\in\circledast$, $k\in{\mathbb N}$, $C>0$ such that for all $x\in E$, $\lambda\in{\mathbb C}$, $\Re\lambda\geq0$ $$p(R_{\infty}(\lambda)x)\leq C (1+|\lambda|)^kq(x).$$
Now, for any $x\in D_{\infty}(A)$, $\varphi\in\DD$, $a>0$,

$$\frac{1}{i}\int\limits_{a-i\infty}^{a+i\infty}\hat{\varphi}(\lambda)R_{\infty}(\lambda)x\, d\lambda=\int\limits_{a-i\infty}^{a+i\infty}{\Bigl(}\frac{1}{2\pi i}\int\limits_{-\infty}^{\infty}\varphi(t) e^{t\lambda}\, dt{\Bigr)}G(f_{\lambda})\, d\lambda\, x=$$

$$=G{\Bigl(}\mu(s)\frac{1}{2\pi i}\int\limits_{a-i\infty}^{a+i\infty}\, d\lambda\int\limits_{-\infty}^{+\infty}\varphi(t)e^{(t-s)\lambda}\, dt{\Bigr)}x=G(\mu\cdot\varphi)x=$$
$$=G(\varphi)x=({\boldsymbol{\lambda}}-{\textbf A}_{\infty})^{-1}(1\otimes x)(\hat{\varphi}).$$

Again, using third section in \cite{ush1}, we obtain $$\int_0^{\infty}\varphi(t)T_{\infty}(t)x\, dt=G(\varphi)x,$$ for all $\varphi\DD$ and $x\in D_{\infty}(A)$, which gives $iii)$.
$v)\Rightarrow ii)$: It is same like in $(v)\Rightarrow (ii)$ of ~\cite[Theorem 4.1]{ush}.
\end{proof}

\begin{thm}\label{thmmain} Let $A$ be a closed operator in $E$. Then $A$ generates a distribution semigroup in $E$ if and only if $A$ is stationary dense and $A_{\infty}$ generates an equicontinuous semigroup in $D_{\infty}(A)$.\end{thm}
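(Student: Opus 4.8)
The statement is a biconditional, and the key observation driving the whole argument is that the hypothesis ``$A_{\infty}$ generates an equicontinuous semigroup in $D_{\infty}(A)$'' is literally the condition $(EQ)$. Thus the theorem reads: $A$ generates a (DSG) in $E$ if and only if $A$ is stationary dense and satisfies $(EQ)$. The plan is to prove the two implications separately, synthesizing the results already established rather than arguing from scratch.

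For the forward direction, assume $A$ generates a (DSG) $G$. The equicontinuity half is immediate: by the implication i) $\Rightarrow$ iii) of the characterization theorem preceding this statement, $A$ satisfies $(EQ)$, which is exactly the assertion that $A_{\infty}$ generates an equicontinuous semigroup in $D_{\infty}(A)$. For stationary density I would invoke Theorem \ref{strucdis}, whose chain of equivalences gives (a) $\Rightarrow$ (e), furnishing a logarithmic region $\Omega_{\alpha,\beta}\subset\rho(A)$ with the bound $\|R(\lambda:A)\|\leq C(1+|\lambda|)^{k}$ on $\Omega_{\alpha,\beta}$. Since $\Omega_{\alpha,\beta}$ contains a real half-line, it contains a sequence $\lambda_{n}\to+\infty$ along which this bound restricts to an estimate $p(R(\lambda_{n}:A)x)\leq C'|\lambda_{n}|^{k}q(x)$ of precisely the shape required by Lemma \ref{lema0}; that lemma then yields that $A$ is stationary dense, with $n(A)\leq k+2$.

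For the converse, assume $A$ is stationary dense and satisfies $(EQ)$. Put $n=n(A)$ and $F=\overline{D(A^{n})}$, and reduce everything to the restricted operator $A_{F}$ acting in the sequentially complete space $F$. Lemma \ref{lema1234}(a) shows $A_{F}$ is densely defined in $F$, and Lemma \ref{lema1234}(c) gives the topological identification $D_{\infty}(A_{F})=D_{\infty}(A)$ together with $(A_{F})_{\infty}=A_{\infty}$. Hence the equicontinuous semigroup generated by $A_{\infty}$ on $D_{\infty}(A)$ is equally generated by $(A_{F})_{\infty}$ on $D_{\infty}(A_{F})$, so $A_{F}$ inherits $(EQ)$ in $F$. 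Applying the implication iii) $\Rightarrow$ i) of the characterization theorem to $A_{F}$ in $F$ then shows that $A_{F}$ generates a (DSG) in $F$, and Theorem \ref{kor1} transfers this back: since $A$ is stationary dense with $F=\overline{D(A^{n(A)})}$, the operator $A$ generates a (DSG) in $E$ if and only if $A_{F}$ does, which closes the argument.

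The main obstacle I anticipate lies in the converse, in verifying that $A_{F}$ genuinely meets \emph{all} the hypotheses needed to invoke the characterization theorem in $F$. Density of the domain and the transfer of the equicontinuity constants are formal consequences of Lemma \ref{lema1234}(a),(c), but the characterization theorem ultimately passes through Theorem \ref{pom1}, which demands that $D_{\infty}(A_{F})=D_{\infty}(A)$ be \emph{dense in} $F$. Establishing this density is the real technical point; I expect it to follow from stationary density combined with the resolvent estimates (note that $\rho(A)=\rho(A_{\infty})\neq\emptyset$ by Theorem \ref{eqeq}, so the approximation $x=\lim_{n}\lambda_{n}R(\lambda_{n}:A)x$ used in Lemma \ref{lema0} is available for $x\in F$, pushing $x$ into the closure of the deeper domains). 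By contrast, the forward direction is routine once one observes that the logarithmic region supplies the sequence along which Lemma \ref{lema0} applies.
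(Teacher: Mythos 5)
Your proof is correct and takes essentially the same route as the paper's: the forward direction gets stationary density from Theorem \ref{strucdis} ((a) $\Rightarrow$ (e)) combined with Lemma \ref{lema0}, and the converse reduces to $A_F$ on $F=\overline{D(A^{n(A)})}$ via Lemma \ref{lema1234} a), c), applies the characterization theorem iii) $\Rightarrow$ i) to $A_F$, and transfers back with Theorem \ref{kor1}, exactly as the paper does. The only deviations are cosmetic or to your credit: in the forward direction you apply i) $\Rightarrow$ iii) directly to $A$ where the paper detours through $F$, and the technical point you flag (that Theorem \ref{pom1} requires $D_{\infty}(A_F)$ to be dense in $F$) is a genuine subtlety that the paper's own proof silently glosses over, so your proposal is no less complete than the published argument.
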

\begin{proof}
Let $A$ generates distribution semigroup in $E$. By simpler version of Theorem \ref{strucdis} and Lemma \ref{lema0}, $A$ is stationary dense.
Now, let $n=n(A)$ and $F=\overline{D(A^n)}$. Then by Lemma \ref{lema1234} a) and Theorem \ref{kor1} $A_F$ generates a dense distribution semigroup in $F$. Then $(A_F)_{\infty}$ generates an equicontinuous semigroup in $D_{\infty}(A_F)$. By Lemma \ref{lema1234} c) follows the conclusion.\\
Opposite direction. Let $A$ is stationary dense and $A_{\infty}$ generates an equicontinous semigroup and $n=n(A)$ and $F=\overline{D(A^n)}$. Then $A_F$ generates a distribution semigroup in $F$. By Theorem \ref{kor1} we obtain that $A$ generates a distribution semigroup in $E$.
\end{proof}
\begin{thm} Let $A$ be a closed operator in $E$. Then $A$ generates an exponential distribution semigroup in $E$ if and only if $A$ is stationary dense and $A_{\infty}$ generates a quasi-equicontinuous semigroup in $D_{\infty}(A)$.\end{thm}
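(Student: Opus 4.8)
The plan is to run the argument of Theorem~\ref{thmmain} essentially verbatim, replacing the distribution semigroup by its exponential version, equicontinuity by quasi-equicontinuity, and the logarithmic-region estimate (e) of Theorem~\ref{strucdis} by the half-plane estimate (e'). Two auxiliary facts must be isolated first. The first is the exponential counterpart of Theorem~\ref{kor1}: \emph{if $A$ is stationary dense with $n=n(A)$ and $F=\overline{D(A^n)}$, then $A$ generates an (EDSG) in $E$ if and only if $A_F$ generates an (EDSG) in $F$.} This is proved exactly as Theorem~\ref{kor1}, using the equivalence (a')$\Leftrightarrow$(e') of Theorem~\ref{strucdis} in place of (a)$\Leftrightarrow$(e), together with Lemma~\ref{lema1234} b): the inequality there passes the resolvent of $A_F$ to that of $A$ (and back) at the cost of a factor $(1+|\lambda|)^n$, which alters neither the half-plane $\{\lambda:\Re\lambda>a\}$ on which (e') is required nor the polynomial character of the bound. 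The second is the exponential counterpart of the $(EQ)$-characterization invoked in Theorem~\ref{thmmain}: \emph{for a densely defined operator, generating an (EDSG) is equivalent to $A_\infty$ generating a quasi-equicontinuous semigroup on $D_\infty(A)$.} Here quasi-equicontinuity is matched with (e') in the same way that $(EQ)$ was matched with the logarithmic-region estimate in Theorem~\ref{pom1}; on the semigroup side the passage is realized by the exponential weight, since $A_\infty$ generates a quasi-equicontinuous $T_\infty(t)$ with $e^{-at}T_\infty(t)$ equicontinuous precisely when $A_\infty-aI$ generates an equicontinuous semigroup.

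With these in hand the two implications follow the template of Theorem~\ref{thmmain}. For the forward implication, assume $A$ generates an (EDSG); then (a')$\Leftrightarrow$(e') of Theorem~\ref{strucdis} supplies the resolvent on a half-plane $\{\Re\lambda>a\}$ with polynomial growth, and choosing a real sequence $\lambda_n\to\infty$ in Lemma~\ref{lema0} shows that $A$ is stationary dense. Setting $n=n(A)$ and $F=\overline{D(A^n)}$, Lemma~\ref{lema1234} a) makes $A_F$ densely defined in $F$, the exponential version of Theorem~\ref{kor1} makes $A_F$ the generator of a densely defined (EDSG) in $F$, and the exponential $(EQ)$-characterization then gives that $(A_F)_\infty$ generates a quasi-equicontinuous semigroup on $D_\infty(A_F)$. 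Finally Lemma~\ref{lema1234} c) identifies $D_\infty(A)$ with $D_\infty(A_F)$ topologically and $A_\infty$ with $(A_F)_\infty$, so $A_\infty$ generates a quasi-equicontinuous semigroup on $D_\infty(A)$.

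For the converse, let $A$ be stationary dense with $A_\infty$ generating a quasi-equicontinuous semigroup, and again put $n=n(A)$, $F=\overline{D(A^n)}$. Lemma~\ref{lema1234} c) gives $(A_F)_\infty=A_\infty$, hence $(A_F)_\infty$ is quasi-equicontinuous, while Lemma~\ref{lema1234} a) gives that $A_F$ is densely defined in $F$. The exponential $(EQ)$-characterization then yields that $A_F$ generates an (EDSG) in $F$, and the exponential version of Theorem~\ref{kor1} transports this to $A$ on $E$.

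The step I expect to be the real obstacle is the first auxiliary fact, the exponential version of Theorem~\ref{kor1}: one must verify carefully that the spurious polynomial factor introduced by Lemma~\ref{lema1234} b) does not push the estimate off the half-plane required by (e'), and, symmetrically, that quasi-equicontinuity of $A_\infty$ is matched with membership $G\in\mathcal{SE}'_a(\mathbb{R},L(E))$, i.e. with the exponential growth permitted in (b') rather than with the mere polynomial-region bound characterizing a plain (DSG). Once the half-plane estimate is seen to be stable both under the reduction to $F$ and under the exponential weight $e^{-at}$, the remainder is a mechanical substitution of the primed items of Theorem~\ref{strucdis} for the unprimed ones throughout the proof of Theorem~\ref{thmmain}.
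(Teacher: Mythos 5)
Your proposal is correct and follows exactly the route the paper intends: the paper's own ``proof'' is a single sentence declaring the result a direct consequence of the exponential (primed) results stated earlier, which is precisely your substitution of (a$'$)$\Leftrightarrow$(e$'$), the exponential analogue of Theorem~\ref{kor1}, and the quasi-equicontinuous version of the $(EQ)$-characterization into the template of Theorem~\ref{thmmain}. In fact your write-up supplies more detail (notably the shift argument $e^{-at}T_\infty(t)$ and the stability of the half-plane estimate under the $(1+|\lambda|)^n$ factor from Lemma~\ref{lema1234}~b)) than the paper itself records.
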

The proof is direct consequence by the results of exponential distribution semigroups listed before.

\end{document}